\DeclareMathAlphabet{\mathcal}{OMS}{cmsy}{m}{n}
\newcommand{\hide}[1]{}
\newcommand\blfootnote[1]{%
  \begingroup
  \renewcommand\thefootnote{}\footnote{#1}%
  \addtocounter{footnote}{-1}%
  \endgroup
}
\DeclareMathOperator{\add}{add{}}
\DeclareMathOperator{\depth}{depth}
\DeclareMathOperator{\End}{End}
\DeclareMathOperator{\Ext}{Ext}
\DeclareMathOperator{\gldim}{gldim}
\DeclareMathOperator{\Hom}{Hom}
\DeclareMathOperator{\rank}{rank}
\DeclareMathOperator{\Spec}{Spec}
\DeclareMathOperator{\syz}{syz{}}
\renewcommand{\phi}{\varphi}
\renewcommand{\leq}{\leqslant}
\renewcommand{\geq}{\geqslant}
\newcommand{\nccr}{\emph{noncommutative crepant resolution}}
\renewcommand{\to}{\longrightarrow}
\newcommand{\p}{{\mathfrak{p}}}
\newcommand{\m}{{\mathfrak{m}}}
\newcommand{\ZZ}{{\mathbb Z}}
\theoremstyle{plain}
\newtheorem{theorem}{Theorem}
\newtheorem{prop}[theorem]{Proposition}
\newtheorem{proposition}[theorem]{Proposition}
\newtheorem{lem}[theorem]{Lemma}
\newtheorem{lemma}[theorem]{Lemma}
\newtheorem{cor}[theorem]{Corollary}
\newtheorem{question}[theorem]{Question}
\newtheorem*{conjecture*}{Conjecture}
\newtheorem*{theorem*}{Theorem}
\newtheorem*{prop*}{Proposition}
\theoremstyle{definition}
\newtheorem{definition}[theorem]{Definition}
\newtheorem{remark}[theorem]{Remark}
\newtheorem{example}[theorem]{Example}
\newtheorem{Hypo}[theorem]{Hypotheses}
\numberwithin{theorem}{section}
\numberwithin{equation}{section}
\begin{document}

\title[]{%
Gorenstein and Totally Reflexive Orders
}

 \author[J.J. Stangle]{Josh Stangle}

\thanks{}

\date{\today}


\keywords{}

\begin{abstract} In this paper we study orders over Cohen-Macaulay rings. We discuss the properties needed for these orders to give noncommutative crepant resolutions of the base rings; namely, we want algebraic analogs of birationality, nonsingularity, and crepancy. While some definitions have been made, we discuss an alternate definition and obstructions to the existence of such objects. We then give necessary and sufficient conditions for an order to have certain desirable homological properties. We examine examples of rings satisfying these properties to prove that certain endomorphism rings over abelian quotient singularities have infinite global dimension.

\end{abstract}
\maketitle

\section{Introduction}

\blfootnote{\textit{Keywords}. Noncommutative crepant resolutions, Gorenstein orders, Endormorphism Rings}

In \cite{VDB1}, Van den Bergh introduced the notion of a \emph{noncommutative (crepant) resolution} of a commutative ring $R$ as an algebraic analog of desingularizations in algebraic geometry. This is a certain endomorphism ring, $\End_R(M)$, over a commutative ring $R$, which is also a maximal Cohen-Macaulay $R$-module and has finite global dimension. A great deal of work has been done in the time since. In the case where $R$ is a Gorenstein ring, Van den Bergh's definition leads to strong theorems and connections to geometric notions. For example, appropriate conditions on $R$ guarantee that $R$ having a non-commutative crepant resolution is equivalent to $\Spec R$ having a commutative crepant resolution. This is the material of Section 2, where some work of Van den Bergh and Stafford-Van den Bergh \cites{VDB1, VDBSTAFF} and Leuschke and Buchweitz-Leuschke-Van den Bergh \cites{Leuschke2, BLV1} is presented. 

In the case where $R$ is not Gorenstein, the situation is much less clear. One goal of study is trying to find a construction in this case which will give some of the main results from the Gorenstein case, in particular Theorems \ref{Gor_equiv}, \ref{vdb1}, and \ref{vdb2}. Work in this setting has been done in large part by Dao, Ingalls, and Faber in \cite{DIF}; Dao, Iyama, Takahashi, and Vial in \cite{DITV}; and Iyama and Wemyss in \cite{IW1}. In these articles, the definition of a noncommutative crepant resolution in the Gorenstein case is applied to the non-Gorenstein case. This leads to some positive results, but the loss of some strong theorems, as in Example \ref{2veron}.

The goal of this note is to discuss an alternative definition of a noncommutative resolution; namely replacing the conditions of $\End_R(M)$ being maximal Cohen-Macaulay with the stronger condition of total reflexivity. It turns out, these cannot exist over non-Gorenstein rings, which is the main result of section 3. In section 4 we discuss the Gorenstein property of an order (which is guaranteed for a noncommutative crepant resolution over Gorenstein $R$) and give necessary and sufficient conditions for $\End_R(R\oplus\omega)$ to be a Gorenstein order when $R$ is Cohen-Macaulay with canonical module $\omega$. Finally in section 5 we use some results of Iyama and Nakajima \cite{Iyama_steady} to find the global dimension of $\End_R(R \oplus \omega)$ in the case of abelian quotient singularities. We begin with a brief review of the case when $R$ is Gorenstein. This paper is a part of the author's dissertation, which is still in progress.  In this paper, all modules are assumed to be finitely generated unless otherwise stated.

\section{The Gorenstein Case}

In this section we treat the case where $R$ is a Gorenstein local normal domain of dimension $d$. Let us recall some definitions.

\begin{definition}
\mbox{}
\begin{itemize} 
\item A finitely generated module $M$ is \emph{maximal Cohen-Macaulay} (MCM) if $\depth_R(M)=\dim(R)$. 

\item An algebra $\Lambda$ is an $R$-\emph{order} if it is an MCM $R$-module.

\item An R-algebra $\Lambda$ is \emph{birational} if $\Lambda \otimes_R K\cong M_n(K)$ where $K$ is the fraction field of $R$.

\item An $R$-algebra $\Lambda$ is \emph{symmetric} if $\Hom_R(\Lambda,R)\cong\Lambda$ as an $\Lambda$-$\Lambda$-bimodules. 

\item $\Lambda$ is \emph{non-singular} if $\gldim(\Lambda_\p)=\dim R_\p$ for all $\p \in \Spec R$.

\end{itemize}
\end{definition}

First, we note that under mild assumptions, non-singularity can be checked only at maximal ideals:

\begin{proposition} \cite{IW1}*{Proposition 2.17} For an order $\Lambda$ over a Cohen-Macaulay ring $R$ with canonical module $\omega$ the following are equivalent: 
\begin{itemize}
\item $\Lambda$ is non-singular.
\item $\gldim \Lambda_\m =\dim R_\m$ for all maximal prime ideals $\m\in \Spec R$. 
\end{itemize}
\end{proposition}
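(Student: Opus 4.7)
The direction (1) $\Rightarrow$ (2) is immediate, since maximal ideals are among all primes. So the work is in (2) $\Rightarrow$ (1): given that $\gldim \Lambda_\m = \dim R_\m$ at every maximal ideal, show the same at every prime $\p$.

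The plan is to reduce to a maximal ideal by localization and then invoke the following general principle, which I would state and use as the key technical ingredient: \emph{if $\Gamma$ is an order over a Cohen--Macaulay local ring $(S,\n)$ with canonical module, then $\gldim \Gamma < \infty$ forces $\gldim \Gamma = \dim S$.} The proof of this principle rests on the Auslander--Buchsbaum formula for orders: since $\Gamma$ is MCM over $S$, any $\Gamma$-module $M$ of finite projective dimension satisfies $\pd_\Gamma M + \depth_S M = \dim S$. This immediately gives $\gldim \Gamma \le \dim S$. For the reverse inequality, $\Gamma$ is semiperfect (being module-finite over a local ring), so the global dimension is attained on a simple $\Gamma$-module $T$; since $\n$ annihilates $T$ one has $\depth_S T = 0$, and the Auslander--Buchsbaum formula then forces $\pd_\Gamma T = \dim S$.

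With this principle in hand, here is the proof of (2) $\Rightarrow$ (1). Fix $\p \in \Spec R$ and choose a maximal ideal $\m \supseteq \p$. Then $\Lambda_\m$ is an order over the CM local ring $R_\m$ with $\gldim \Lambda_\m = \dim R_\m < \infty$. Since $\Lambda_\p$ is a further localization of $\Lambda_\m$ (specifically at the prime $\p R_\m$), and global dimension does not increase under localization, $\gldim \Lambda_\p \le \gldim \Lambda_\m < \infty$. I would then verify that $\Lambda_\p$ is itself an $R_\p$-order: this follows because the MCM property of $\Lambda$ over the CM ring $R$ localizes (an MCM module over a CM ring is Cohen--Macaulay and equidimensional, so $\depth_{R_\p} \Lambda_\p = \dim R_\p$). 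Applying the principle above to $\Lambda_\p$ over $R_\p$ gives $\gldim \Lambda_\p = \dim R_\p$, as required.

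The main obstacle is the principle itself rather than the localization step — everything else is bookkeeping about how MCM and finite global dimension behave under localization. In particular, the lower bound $\gldim \Gamma \ge \dim S$ relies on having a simple $\Gamma$-module whose $S$-depth is zero, which in turn uses that $\Gamma$ is semiperfect over the local base; this is where the hypothesis that $R$ has a canonical module plays its (mild) role in the ambient framework of the paper, ensuring the Auslander--Buchsbaum machinery for orders is available.
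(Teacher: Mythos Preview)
The paper does not prove this statement at all; it is simply quoted from \cite{IW1}. So there is no ``paper's own proof'' to compare against, only the question of whether your argument is correct.

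Your localization framework is sound, and the lower bound $\gldim \Lambda_\p \ge \dim R_\p$ is correct: once $\gldim \Lambda_\p < \infty$, any simple $\Lambda_\p$-module $T$ has $\depth_{R_\p} T = 0$, and the depth lemma applied along a finite projective resolution by MCM modules forces $\pd_{\Lambda_\p} T \ge \dim R_\p$.

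The genuine gap is your ``key principle'' and the accompanying Auslander--Buchsbaum \emph{equality} for orders. Both are false as stated. Take $S=k$ a field and $\Gamma$ the algebra of upper-triangular $2\times 2$ matrices over $k$ (equivalently the path algebra of $A_2$). Then $\Gamma$ is a finite-dimensional $k$-algebra, hence trivially an order over the Cohen--Macaulay local ring $k$ with canonical module $k$; yet $\gldim\Gamma=1\neq 0=\dim S$. The simple at the sink has projective dimension $1$ and $S$-depth $0$, so $\pd_\Gamma M+\depth_S M=1\neq 0=\dim S$. Only the inequality $\pd_\Gamma M\ge \dim S-\depth_S M$ survives for orders; the reverse inequality, which you use for the upper bound, does not.

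Consequently, finiteness of $\gldim\Lambda_\p$ alone cannot deliver $\gldim\Lambda_\p\le\dim R_\p$. The hypothesis $\gldim\Lambda_\m=\dim R_\m$ (not merely $<\infty$) must be exploited more substantially. In Iyama--Wemyss this is done by passing through the Gorenstein-order condition: one shows that $\gldim\Lambda_\m=\dim R_\m$ forces $\omega_{\Lambda_\m}=\Hom_{R_\m}(\Lambda_\m,\omega_{R_\m})$ to be projective, that this property localizes to $\p$, and then that a Gorenstein order of finite global dimension over a CM local ring automatically has $\gldim=\dim$. The argument uses the identification $\Ext^i_\Lambda(-,\omega_\Lambda)\cong\Ext^i_R(-,\omega_R)$ and canonical duality on MCM modules in an essential way --- so the canonical module is not a mild bookkeeping hypothesis here but the engine of the proof.
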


In \cite{Leuschke2}, Leuschke addresses the properties we would like from a noncommutative resolution of singularites. We'd like it to be symmetric, birational and non-singular. In the Gorenstein case these conditions give a concrete description of these orders: 

\begin{theorem}\cite{Leuschke2}*{Theorem 2} Let $R$ be a Gorenstein normal domain of dimension $d$ and $\Lambda$ a module finite $R$-algebra. The following are equivalent:

\begin{itemize}
\item[(i)] $\Lambda$ is a symmetric birational $R$-order and has finite global dimension.
\item[(ii)] $\Lambda \cong \End_R(M)$ for some reflexive $R$-module $M$, and $\Lambda$ is homologically homogenous (all simple $\Lambda$-modules have projective dimension $d$).
\item [(iii)] $\Lambda\cong \End_R(M)$ for some reflexive $R$-module $M$, $\Lambda$ is MCM over $R$ and $\gldim(\Lambda)<\infty$.

\end{itemize}
\label{Gor_equiv}
\end{theorem}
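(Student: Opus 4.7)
My plan is to establish the equivalences by proving (ii) $\Leftrightarrow$ (iii) directly and (iii) $\Leftrightarrow$ (i) separately. The equivalence (ii) $\Leftrightarrow$ (iii) is essentially the non-commutative Auslander--Buchsbaum formula for module-finite $R$-algebras. A simple $\Lambda$-module $S$ is annihilated by some maximal ideal $\m$ of $R$, so computing $\pd_\Lambda S$ reduces to the local algebra $\Lambda_\m$, where one has $\pd_{\Lambda_\m} S + \depth_{R_\m} S = \depth_{R_\m} \Lambda_\m$. If $\Lambda$ is MCM with $\gldim \Lambda < \infty$, then $\depth_{R_\m} \Lambda_\m = d$ and $\depth S = 0$, so $\pd_\Lambda S = d$, which is homological homogeneity. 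Conversely, homological homogeneity bounds $\gldim \Lambda$ by $d$ (for a semi-local Noetherian algebra the global dimension equals the supremum of projective dimensions of simples), and running the formula in reverse gives $\depth_R \Lambda = d$, so $\Lambda$ is MCM.

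For (iii) $\Rightarrow$ (i), I would verify each property of $\Lambda = \End_R(M)$ in turn. Finite global dimension and the order (MCM) property are given. Birationality follows from $\Lambda \otimes_R K \cong \End_K(M \otimes_R K) \cong M_n(K)$, since $M \otimes_R K$ is a nonzero finite-dimensional $K$-vector space ($M$ is torsion-free, being reflexive over the domain $R$). The symmetric condition $\Hom_R(\Lambda, R) \cong \Lambda$ as $\Lambda$-bimodules is the delicate step: I would construct a canonical bimodule pairing on $\Lambda$ (an analog of the matrix trace pairing), verify it becomes an isomorphism at every height-one prime $\p$, where reflexivity of $M$ over the normal domain $R$ makes $M_\p$ free and so $\Lambda_\p \cong M_n(R_\p)$, for which the trace pairing is manifestly perfect, and then extend the conclusion globally using that both $\Lambda$ and $\Hom_R(\Lambda, R)$ are reflexive as $R$-modules.

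For (i) $\Rightarrow$ (iii), I would use birationality to identify $\Lambda \otimes_R K$ with $\End_K(V)$ for $V \cong K^n$, pick a nonzero $v \in V$, form the $\Lambda$-stable $R$-lattice $N := \Lambda v \subseteq V$ (which is finitely generated over $R$ and spans $V$ over $K$, since $V$ is the unique simple $M_n(K)$-module), and take its reflexive hull $M := N^{**}$, which remains $\Lambda$-stable by functoriality. The natural map $\Lambda \to \End_R(M)$ is injective and becomes an isomorphism after tensoring with $K$. Both sides are reflexive $R$-modules, so it suffices to check equality at height-one primes, where the symmetric hypothesis makes $\Lambda_\p$ a maximal order in $M_n(K)$ and hence equal to $\End_{R_\p}(M_\p)$. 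The MCM condition then follows from the backwards Auslander--Buchsbaum argument applied to the simple $\Lambda$-modules. The principal obstacle is the symmetric condition itself: both producing it from the $\End_R(M)$ structure in (iii) $\Rightarrow$ (i) and exploiting it to force equality of orders in (i) $\Rightarrow$ (iii) rely on a careful codimension-one analysis in which the normality of the Gorenstein base $R$ is essential.
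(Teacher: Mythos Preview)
The paper does not supply a proof of this theorem; it is quoted verbatim from \cite{Leuschke2} and used only as motivation for Definition~\ref{DEF1}. So there is no in-paper argument to compare your proposal against.

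That said, your outline is essentially the route taken in the cited reference. The equivalence (ii)$\Leftrightarrow$(iii) via the noncommutative Auslander--Buchsbaum formula is exactly how it is done there, and the reflexivity/height-one reduction you describe for (iii)$\Rightarrow$(i) is the standard mechanism (the paper even alludes to this, citing \cite{IW1}*{Lemma 2.10} for the fact that endomorphism rings over normal domains are symmetric). The one place where your sketch is slightly optimistic is (i)$\Rightarrow$(iii): the symmetric hypothesis alone does not make $\Lambda_\p$ a maximal order over the DVR $R_\p$. One must also use the finite global dimension of $\Lambda$ to see that $\Lambda_\p$ is hereditary, and it is the combination ``hereditary and symmetric'' (equivalently, trivial inverse different) that forces $\Lambda_\p=\End_{R_\p}(M_\p)$. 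You already flag this neighborhood as the principal obstacle, so this is a refinement rather than a gap.
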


Motivated by this theorem we have the following definition. 

\begin{definition} \label{DEF1}
A \nccr \ of a $d$-dimensional Gorenstein normal domain $R$ is a ring $\Lambda=\End_R(M)$ for a reflexive module $M$ such that $\Lambda$ is an $R$-order and has finite global dimension. \end{definition}

The following theorems of Van den Bergh and Stafford-Van den Bergh show how this definition mirrors the geometric case and that it influences the singularities of $R$.  

\begin{theorem} \cite{VDB1}*{Theorem 6.6.3} Let $R$ be a Gorenstein normal domain which is a finitely generated $k$-algebra with $k$ an algebraically closed field. Assume $R$ is three-dimensional and has terminal singularities. Then $R$ has a noncommutative crepant resolution if and only if $\Spec R$ has a commutative crepant resolution. \label{vdb1}
\end{theorem}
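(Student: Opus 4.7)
The plan is to prove both implications by establishing a tilting-type derived equivalence between $D^b(Y)$ and $D^b(\Lambda)$ for a crepant resolution $f\colon Y\to X:=\Spec R$ and an NCCR $\Lambda=\End_R(M)$, transferring crepancy on the geometric side to the MCM/finite-global-dimension conditions on the algebraic side.

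For the direction ($\Leftarrow$), suppose $f\colon Y\to X$ is a commutative crepant resolution. The first observation I would record is that in dimension $3$ with $R$ terminal, crepancy $K_Y=f^*K_X$ forces $f$ to be \emph{small}: there are no exceptional divisors, so the exceptional locus has codimension at least $2$. The central step is to construct a tilting bundle $\mathcal{T}$ on $Y$. The approach is to equip $D^b(Y/X)$ with a perverse $t$-structure (as in Bridgeland's construction for threefold flops) whose heart is a finite length abelian category, and then take $\mathcal{T}$ to be the direct sum of the projective generators of that heart, normalized so that $\calo_Y$ is a summand. Once $\mathcal{T}$ is in hand, set $M:=f_*\mathcal{T}$ and $\Lambda:=\End_Y(\mathcal{T})$. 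By the tilting formalism $\Lambda$ has finite global dimension, and smallness of $f$ combined with local freeness of $\mathcal{T}$ gives $R^i f_*\cEnd(\mathcal{T})=0$ for $i>0$, so $\Lambda\cong \End_R(M)$. Crepancy of $f$ and local freeness of $\mathcal{T}$ yield that $\Lambda$ is MCM over $R$, and reflexivity of $M$ is automatic from being a pushforward of a vector bundle under a small map.

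For the direction ($\Rightarrow$), start with an NCCR $\Lambda=\End_R(M)$. The plan is to realize $X=\Spec R$ as the coarse moduli of $\Lambda$-modules of dimension vector corresponding to the simple $\Lambda\otimes_R K$-modules, and to build a smooth resolution $Y$ by varying the GIT stability condition. Concretely, present $\Lambda$ as a path algebra of a quiver with relations (after Morita reduction), fix the dimension vector and a generic stability parameter $\theta$, and let $Y:=\mathcal{M}_\theta(\Lambda)$ be the moduli of $\theta$-stable $\Lambda$-modules. Finite global dimension of $\Lambda$ makes the deformation theory of stable objects unobstructed, so $Y$ is smooth; the tautological bundle gives a Fourier-Mukai kernel inducing $D^b(Y)\simeq D^b(\Lambda)$; symmetry and the MCM property of $\Lambda$ transport across this equivalence to $\omega_Y\cong \calo_Y$ relative to $f\colon Y\to X$, i.e.\ $f$ is crepant; and birationality of $\Lambda$ over $R$ gives that $f$ is an isomorphism over the smooth locus of $X$.

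\textbf{Main obstacle.} The hard part, in both directions, is the existence of a tilting object. For ($\Leftarrow$) the construction of the perverse heart and its projective generators requires genuine input from the geometry of terminal threefold contractions, and smallness of $f$ is used in an essential way to get a bounded heart and to identify $\Hom_Y$ with $\Hom_R$ after pushforward. For ($\Rightarrow$) the analogous difficulty is proving that the moduli space is smooth, non-empty, and that the universal family is a tilting generator whose endomorphism ring recovers $\Lambda$; terminality is what ensures the exceptional locus is small and the resulting $f$ is crepant rather than merely birational. Once the tilting equivalence is available on either side, the translation of properties (finite global dimension $\leftrightarrow$ smoothness, symmetric and MCM $\leftrightarrow$ trivial relative canonical) is formal.
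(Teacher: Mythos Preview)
The paper does not prove this theorem; it is quoted from \cite{VDB1} as background and no argument is given here. So there is no ``paper's own proof'' to compare against.

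That said, your outline is a fair high-level sketch of Van den Bergh's actual argument in \cite{VDB1}: for ($\Leftarrow$) one builds a tilting bundle on the crepant resolution via Bridgeland's perverse $t$-structure for threefold flopping/contracting maps and takes its endomorphism ring; for ($\Rightarrow$) one realizes a smooth crepant resolution as a moduli space of stable $\Lambda$-modules for a generic stability parameter. A couple of points worth tightening. First, in ($\Leftarrow$) the claim that ``crepancy $K_Y=f^*K_X$ forces $f$ to be small'' is not a consequence of crepancy alone; you need the terminality of $R$ (any exceptional divisor would have positive discrepancy). Second, in ($\Rightarrow$) the assertion that finite global dimension of $\Lambda$ makes deformations of stable modules unobstructed is not quite the mechanism; smoothness of the moduli space comes from the Calabi--Yau/symmetry property of $\Lambda$ (so that $\Ext^2$ pairs with $\Ext^1$ and obstructions vanish for stable objects), together with the three-dimensionality. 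Finally, identifying the endomorphism ring of the universal family with $\Lambda$ and checking crepancy of the resulting map are real steps in \cite{VDB1}, not formalities; your summary is accurate in spirit but these require work.
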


\begin{remark} Note that Theorem \ref{vdb1} is not true in higher Krull dimension, see \cite{DaoExample}. \end{remark}

\begin{theorem}\cite{VDBSTAFF}*{Theorem 1.1} 
Let $\Delta$ be a homologically homogeneous $k$-algebra with $k$ algebraically closed, then $Z=Z(\Delta)$ has at most rational singularities.

In particular, if a normal affine $k$-domain $R$ has a noncommutative crepant resolution then it has rational singularities.  \label{vdb2}
\end{theorem}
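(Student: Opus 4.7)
First, the ``in particular'' clause reduces to the main statement. If $R$ is a normal affine $k$-domain and $\Lambda=\End_R(M)$ is a noncommutative crepant resolution in the sense of Definition \ref{DEF1}, then $M$ is reflexive over $R$, so $\Lambda\otimes_R K\cong M_n(K)$ is central simple over $K=\operatorname{Frac}R$, forcing $Z(\Lambda)=R$. Theorem \ref{Gor_equiv}(ii) then says $\Lambda$ is homologically homogeneous. So it suffices to prove the first assertion with $\Delta$ of that form and $R=Z(\Delta)$.

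For the main statement I would pursue a characteristic-$p$ strategy. By the Smith--Hara theorem, a normal $k$-algebra of finite type in characteristic zero has rational singularities iff, after spreading out to a finitely generated $\ZZ$-subalgebra of $k$ and reducing mod $p$, the fibres over a dense open of $\Spec\ZZ$ are $F$-rational. Both the homological homogeneity of $\Delta$ and the fact that $\Delta$ is a finite module over its centre descend to a suitable spreading out, since they are expressible in terms of finitely many Ext-vanishing and Tor-finiteness statements. So the crux is the positive-characteristic version: if $\Delta$ is homologically homogeneous over a perfect field $k$ of characteristic $p>0$, then $Z=Z(\Delta)$ is $F$-rational.

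To get $F$-rationality of $Z$, I would try to use the homological homogeneity of $\Delta$ to produce enough Frobenius splittings on $Z$. Concretely, pick a maximal ideal $\m\subset Z$ and a system of parameters $\underline{x}$ for $Z_\m$; one wants to show that $(\underline{x})$ is tightly closed. The idea is that $F^e_*\Delta$ is an MCM $\Delta$-module, so homological homogeneity provides bounded projective resolutions; applied to a suitable surjection out of $F^e_*\Delta$ and combined with the symmetric/trace structure available on a symmetric $R$-order, this should yield, for every $e\gg 0$, a $Z$-linear splitting $F^e_*Z\to Z$ not vanishing on a fixed nonzero $c\in Z$. That $c$ then witnesses $(\underline{x})=(\underline{x})^*$ for every parameter ideal, giving $F$-rationality, and the Smith--Hara theorem then pushes the conclusion back to characteristic zero.

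The hard part is the middle step: turning ``finite global dimension of $\Delta$ plus $\Delta$ MCM over $Z$'' into a concrete Frobenius splitting of $Z$. The difficulty is that $Z$-linear splittings are not automatically produced by $\Delta$-linear ones, so one must exploit the symmetry $\Hom_R(\Delta,R)\cong\Delta$ and the trace map $\Delta\to Z$ with real care to manufacture a splitting of the commutative Frobenius from the noncommutative resolution. This is where the bulk of Stafford--Van den Bergh's technical work presumably lives; everything else is reduction and appeal to established correspondences.
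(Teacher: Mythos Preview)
The paper does not prove this theorem; it is quoted from \cite{VDBSTAFF} with no argument supplied, so there is no ``paper's own proof'' to compare your proposal against.

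Your reduction of the ``in particular'' clause via Theorem~\ref{Gor_equiv} is fine in the Gorenstein setting of Section~2 (Definition~\ref{DEF1} already assumes $R$ Gorenstein, and that is what is needed to invoke Theorem~\ref{Gor_equiv}). The paper's remark immediately following the theorem handles the general case by pointing out that in \cite{VDBSTAFF} an NCCR is homologically homogeneous \emph{by definition}, so there the reduction is a tautology rather than an application of Theorem~\ref{Gor_equiv}.

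For the main assertion, what you have written is a strategy with an explicitly conceded gap at the only substantive step: producing a $Z$-linear Frobenius splitting (or a test element for parameter ideals) from finite global dimension and MCM-ness of $\Delta$. The surrounding material---spread-out, descent of homological homogeneity to an open set of closed fibres, and the Smith--Hara correspondence between rational singularities and $F$-rational type---is standard reduction machinery; the entire content of the theorem is precisely the step you flag as ``the hard part'' and do not carry out. Note also that ``$F^e_*\Delta$ is an MCM $\Delta$-module'' is not well-posed as stated: there is no Frobenius endomorphism on a noncommutative $\Delta$, so you would need to work with $F^e_*Z$ and the induced $\Delta$-module $\Delta\otimes_Z F^e_*Z$ or similar, and the MCM and splitting assertions then require real justification. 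Finally, this is not the route Stafford--Van den Bergh actually take: their argument remains in characteristic zero and does not pass through $F$-rationality, so your guess about where ``the bulk of their technical work presumably lives'' is off target.
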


\begin{remark} Note in \cite{VDBSTAFF} that the definition of a noncommutative crepant resolution is any homologically homogeneous ring of the form $\Delta=\End_R(M)$ for $M$ reflexive and finitely generated. In the situation where $R$ is not Gorenstein, this is stronger than Definition \ref{DEF1}, which is why this assumption is not needed in Theorem \ref{vdb2}. In the case where $R$ is Gorenstein, any $\Lambda$ satisfying Definition \ref{DEF1} is homologically homogenous and so the theorem remains true with our definition. 
\end{remark}

The work in the Gorenstein case has largely been in producing these resolutions, and there are many results in this direction in \cites{VDB1,BLV1}.

\section{The non-Gorenstein Case}

We now assume only that $R$ is a Cohen-Macaulay normal domain with canonical module $\omega_R$. Following  \cites{DIF,IW1,DITV}, we define

\begin{definition} A \emph{noncommutative crepant resolution} of a Cohen-Macaulay normal domain, $R$,  is $\Lambda=\End_R(M)$ for a reflexive $R$-module $M$ such that $\Lambda$ is a non-singular $R$-order.\end{definition} 

In this case, however, we know Theorem \ref{Gor_equiv} fails \cite{Leuschke2}*{Example P.3}:

\begin{example} Let $k$ be an infinite field and let $R$ be the complete (2,1)-scroll, that is, $R=k[[x,y,z,u,v]]/I$ with $I$ the ideal generated by the 2x2 minors of  $\left(\begin{smallmatrix} x& y & u \\ y & z & v\end{smallmatrix}\right)$. Then, $R$ is a 3-dimensional CM normal domain of finite CM type \cite{Yoshino}*{ 16.12}. It is known  $\Gamma=\End_R(R\oplus \omega)$ is MCM over $R$, and $\Gamma$ is symmetric since it is an endomorphism ring over a normal domain \cite{IW1}*{Lemma 2.10}. But, Smith and Quarles have shown $\gldim(\Gamma)=4$ \cite{SQ} while $\dim R=3$. Thus $\Gamma$ is a symmetric $R$-order of finite global dimension but it not non-singular, thus $\Gamma$ does not provide a NCCR.
\label{scroll}
\end{example}

In another example, we see that $\End_R(R\oplus \omega)$ does yield a noncommutative crepant resolution. This is the only other known non-Gorenstein ring of dimension 3 with finite CM type \cite{Leuschke2}*{Example P.4}:

\begin{example} Let $R=k[[x^2,xy,xz,y^2,yz,z^2]]$, the second Veronese subring in three variables. Then, $R$ is known to have finite CM type \cite{Yoshino}*{16.10} with indecomposable  MCM modules $R$, $\omega\cong(x^2,xy,xz)$ and $M:=\syz_R(\omega)$. The ring $A:=\End_R(R\oplus \omega \oplus M)$ has global dimension $3$, but it is not MCM. Indeed, $A$ has depth 2, as both $\Hom_R(M,R)$ and $\Hom_R(M,M)$ have depth 2. 

It is easy enough to fix this example: $\Lambda=\End_R(R\oplus \omega)$ is in fact a noncommutative crepant resolution. This is because $R\oplus \omega\cong k[[x,y,z]]$ and so $\End_R(R\oplus \omega)$ is isomorphic to the twisted group ring $k[[x,y,z]]*\ZZ_2$. This is known to have global dimension 3  and be MCM over $R$ \cite{Yoshino}*{Ch. 10}. 
\label{2veron}
\end{example}

\begin{remark}The key point in Theorem \ref{vdb2} is the fact that over a Gorenstein ring of dimension $d$, a symmmetric order of finite global dimension is actually non-singular, and in fact even homologically homogeneous \cite{VDB1}*{Lemma 4.2}. Over a non-Gorenstein ring, these three conditions (symmetric, Maximal Cohen-Macaulay, and finite global dimension), do not guarantee non-singularity, as seen in Example \ref{scroll}. \label{rmk1} \end{remark}



It would be helpful to have an analog of Theorem \ref{Gor_equiv} to produce examples. In order to rescue some of the results from the prior case, we strengthen the hypotheses on $\Lambda=\End_R(M)$. Since the crepant condition (i.e., that $\Lambda$ is maximal Cohen-Macaulay) can be seen as a type of symmetry condition, one might hope to impose more stringent symmetry requirements on $\Lambda$. We recall the following definition:

\begin{definition}
An $R$-module is \emph{totally reflexive} if $M$ is reflexive and $\Ext_R^i(M,R)=\Ext_R^i(M^*,R)=0$ for all $i>0$, where $M^*=\Hom_R(M,R)$. 
\end{definition}

\begin{definition} A \emph{strong NC resolution} of a CM normal domain $R$ is $\Lambda=\End_R(M)$ for a reflexive $R$-module $M$ such that $\Lambda$ is totally reflexive over $R$ and of finite global dimension.\end{definition}

\begin{remark} This definition agrees with the original definition in the Gorenstein case since over a Gorenstein ring the totally reflexive modules are exactly the MCM modules. \end{remark}

We immediately see the trouble with this definition. 

\begin{prop} Let $(R,m,k)$ be a CM local ring, and $\Lambda$ a module-finite $R$-algebra such that $\Lambda^*=\Hom_R(\Lambda,R)$ has finite injective dimension as a left $\Lambda$-module. Additionally suppose $\Ext^i_R(\Lambda,R)=0$ for all $i>0$. Then $R$ is Gorenstein.  

\end{prop}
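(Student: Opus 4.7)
The plan is to transfer the finite injective dimension of $\Lambda^*$ over $\Lambda$ into finite self-injective dimension of $R$, by testing against a simple $\Lambda$-module. The key technical input is the change-of-rings isomorphism
\[
\Ext^i_\Lambda(M,\Lambda^*) \;\cong\; \Ext^i_R(M,R) \qquad (i\geq 0)
\]
for every left $\Lambda$-module $M$. I would obtain this from the Grothendieck spectral sequence
\[
E_2^{p,q} \;=\; \Ext^p_\Lambda\!\bigl(M,\Ext^q_R(\Lambda,R)\bigr) \;\Longrightarrow\; \Ext^{p+q}_R(M,R)
\]
associated to the factorisation $\Hom_R(M,-) \cong \Hom_\Lambda(M,\Hom_R(\Lambda,-))$, which is the Hom--tensor adjunction applied to $\Lambda\otimes_\Lambda M \cong M$. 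The functor $\Hom_R(\Lambda,-)$ preserves injectives, because its left adjoint (restriction of scalars along $R\to\Lambda$) is exact, so the spectral sequence is legitimate; the hypothesis $\Ext^q_R(\Lambda,R)=0$ for $q>0$ then kills every row with $q>0$ and collapses it to the displayed isomorphism.

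The second step is to pick an appropriate $M$. Because $\Lambda$ is module-finite over the local ring $R$, one has $\m\subseteq \rad(\Lambda)$, so $\Lambda/\m\Lambda$ is a nonzero finite-dimensional $k$-algebra. Any simple left module $S$ over $\Lambda/\m\Lambda$ pulls back to a simple left $\Lambda$-module annihilated by $\m$; being cyclic and finite-dimensional over $k$, we have $S\cong k^n$ for some $n\geq 1$ as an $R$-module.

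Combining the two steps, the finite injective dimension of $\Lambda^*$ as a left $\Lambda$-module gives $\Ext^i_\Lambda(S,\Lambda^*)=0$ for $i\gg 0$, whence by the change-of-rings isomorphism $\Ext^i_R(S,R)=0$, i.e.\ $\Ext^i_R(k,R)=0$, for $i\gg 0$. Since vanishing of $\Ext^i_R(k,R)$ in high degrees is well known to characterise finite self-injective dimension of a Noetherian local ring, $R$ has finite self-injective dimension, and a Cohen--Macaulay local ring of finite self-injective dimension is Gorenstein.

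I expect the only subtle point to be the left/right bookkeeping: $\Lambda^*$ must be viewed as a left $\Lambda$-module through the right regular action on $\Lambda$, $(\lambda\cdot f)(x)=f(x\lambda)$, in order for the Hom--tensor adjunction to produce the right functoriality, and one should be precise that $M$ is a left $\Lambda$-module throughout. Once these conventions are pinned down, the Grothendieck spectral sequence degenerates for formal reasons and the remainder of the argument is immediate.
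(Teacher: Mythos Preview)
Your proof is correct and follows essentially the same route as the paper: both use the Grothendieck spectral sequence collapse to obtain the change-of-rings isomorphism $\Ext^i_\Lambda(-,\Lambda^*)\cong\Ext^i_R(-,R)$, then test against a $\Lambda$-module that is a finite-dimensional $k$-vector space as an $R$-module to force $\Ext^i_R(k,R)=0$ for $i\gg 0$. The only cosmetic difference is that the paper takes $M=\Lambda/\m\Lambda$ itself (noting $k$ is a summand as $R$-module), whereas you pass to a simple $\Lambda$-module $S\cong k^n$; both choices yield the same conclusion by the same mechanism.
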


Before the proof, we need the following lemma: 

\begin{lem} \label{lem1} Let $R$ be a Cohen-Macaulay normal domain and $\Lambda$ an $R$-algebra such that $\Ext_R^n(\Lambda,R)=0$ for all $n>0$. For all $i>0$ and all left $\Lambda$-modules $B$ , we have that \[\Ext^i_R(B,R)\cong\Ext_\Lambda^i(B,\Hom_R(\Lambda,R)).\]
\end{lem}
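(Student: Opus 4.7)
The plan is to exploit the Hom--Hom adjunction attached to restriction of scalars along $R\to\Lambda$. For any left $\Lambda$-module $C$, coinduction $\Hom_R(\Lambda,-)$ is right adjoint to the (exact) forgetful functor, giving a natural isomorphism
\[
\Hom_\Lambda\!\bigl(C,\Hom_R(\Lambda,R)\bigr)\;\cong\;\Hom_R(C,R),
\]
where the left $\Lambda$-action on $\Hom_R(\Lambda,R)$ is induced by right multiplication on $\Lambda$. I would then take a resolution $P_\bullet\to B$ of $B$ by finitely generated projective left $\Lambda$-modules. Applying the adjunction termwise identifies the complexes $\Hom_\Lambda(P_\bullet,\Hom_R(\Lambda,R))$ and $\Hom_R(P_\bullet,R)$, so
\[
\Ext^i_\Lambda\!\bigl(B,\Hom_R(\Lambda,R)\bigr)\;=\;H^i\!\bigl(\Hom_R(P_\bullet,R)\bigr).
\]

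The content is then to show this equals $\Ext^i_R(B,R)$. The key observation is that each $P_j$, being a direct summand of some $\Lambda^{n_j}$, inherits the vanishing $\Ext^k_R(P_j,R)=0$ for all $k>0$ from the hypothesis $\Ext^k_R(\Lambda,R)=0$. Thus $P_\bullet$ is a $\Hom_R(-,R)$-acyclic resolution of $B$ (viewed as an $R$-module via restriction).

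To conclude, I would run the standard dimension-shift: break $P_\bullet$ into short exact sequences $0\to K_j\to P_j\to K_{j-1}\to 0$ with $K_{-1}=B$, apply $\Hom_R(-,R)$, and use the resulting long exact sequences together with the vanishing of $\Ext^{>0}_R(P_j,R)$ to produce isomorphisms $\Ext^i_R(B,R)\cong\Ext^1_R(K_{i-2},R)$ and match these with the cohomology $H^i(\Hom_R(P_\bullet,R))$ by induction on $i$. (Equivalently, one can phrase the whole argument as the Grothendieck spectral sequence for the composition ``restrict, then $\Hom_R(-,R)$'', which collapses because the hypothesis makes $\R\Hom_R(\Lambda,R)=\Hom_R(\Lambda,R)$.) There is no real obstacle here; the only substantive step is verifying that projective $\Lambda$-modules are $R$-acyclic, which is immediate from the hypothesis plus finite generation.
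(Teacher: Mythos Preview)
Your proof is correct and is essentially the same as the paper's: the paper simply invokes the change-of-rings spectral sequence $\Ext_\Lambda^p(B,\Ext^q_R(\Lambda,C)) \Rightarrow \Ext^{p+q}_R(B,C)$ from Cartan--Eilenberg and observes that the hypothesis $\Ext^{>0}_R(\Lambda,R)=0$ makes it collapse onto the edge, which is precisely the spectral-sequence reformulation you give at the end. Your explicit adjunction plus $\Hom_R(-,R)$-acyclic-resolution argument is just the hands-on unwinding of that collapse.
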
 
\begin{proof} This follows from the collapsing of the well known (see e.g., \cite{CE}*{Chapter XVI, Section 4}) spectral sequence \[\Ext_\Lambda^p(B,\Ext^q_R(\Lambda,C)) \Rightarrow_p \Ext^{p+q}_R(B,C).\qedhere\]
\end{proof}

And now to prove the Proposition.

\begin{proof}[Proof of Proposition]

\noindent We must only note that $\Lambda/m\Lambda$ is a finite-dimensional vector space over $k$, and we then have that $\Ext^i_R(k,R)$ is a summand of $\Ext^i_R(\Lambda/m\Lambda,R)$. Since $\Lambda^*$ has finite injective dimension,we have that $$\Ext^i_R(\Lambda/m\Lambda,\Lambda^*)=0$$ for $i$ sufficiently large. Then by Lemma \ref{lem1} we have that $\Ext^i_R(\Lambda/m\Lambda,R)$ and hence $\Ext^i_R(k,R)$ is zero for all $i$ sufficiently large. \end{proof}

\begin{cor} If $R$ is a CM local normal domain possessing a strong NC resolution $\Lambda$, then $R$ is Gorenstein.
\end{cor}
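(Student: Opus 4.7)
The plan is to show that a strong NC resolution $\Lambda$ satisfies the two hypotheses of the preceding Proposition, so that applying it directly yields the Gorenstein conclusion. Concretely, I need to verify: (i) $\Ext^i_R(\Lambda,R)=0$ for all $i>0$; and (ii) $\Lambda^*=\Hom_R(\Lambda,R)$ has finite injective dimension as a left $\Lambda$-module.

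For (i), I would invoke total reflexivity. By definition of a strong NC resolution, $\Lambda$ is totally reflexive over $R$, which by the very definition of total reflexivity gives $\Ext^i_R(\Lambda,R)=0$ for all $i>0$ (in fact it gives even more, but only this vanishing is needed here). So (i) is immediate.

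For (ii), I would use the finite global dimension of $\Lambda$. The right $R$-linear, right $\Lambda$-linear structure on $\Lambda$ endows $\Lambda^*=\Hom_R(\Lambda,R)$ with a natural left $\Lambda$-module structure (where $\lambda\in\Lambda$ acts on $\varphi\in\Lambda^*$ by $(\lambda\cdot\varphi)(x)=\varphi(x\lambda)$). Since $\gldim\Lambda<\infty$, every left $\Lambda$-module has finite projective dimension, and equivalently finite injective dimension; in particular, $\mathrm{injdim}_\Lambda(\Lambda^*)<\infty$. This gives (ii).

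With both hypotheses of the Proposition satisfied, the conclusion that $R$ is Gorenstein follows at once. There is no serious obstacle in this argument — it is purely a matter of unpacking the two definitions (total reflexivity and finite global dimension) and observing that each contributes precisely one of the two conditions the Proposition requires.
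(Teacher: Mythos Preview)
Your proposal is correct and is exactly the argument the paper intends: the Corollary is stated without proof as an immediate consequence of the Proposition, and your verification that total reflexivity supplies the Ext-vanishing while finite global dimension forces finite injective dimension of $\Lambda^*$ is precisely what is needed. Nothing is missing.
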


\section{Gorenstein Orders}

When $R$ is Gorenstein, one of the crucial components to Theorem \ref{Gor_equiv} is that a symmetric $R$-order $\Lambda$ of finite global dimension is homologically homogeneous and thus has many desirable homological properties.  We begin with a definition which gives some of the same properties.

\begin{definition} Let $R$ be a $d$-dimensional CM local normal domain with canonical module $\omega_R$. An $R$-order $\Lambda$ is \emph{a Gorenstein order} if $\omega_\Lambda:=\Hom_R(\Lambda, \omega_R)$ is a projective left $\Lambda$-module. 
\end{definition}

If we demand our order $\Lambda:=\End_R(M)$ be a Gorenstein order and of finite global dimension, Remark \ref{rmk1} implies we rescue Theorem \ref{vdb2}.

\begin{theorem}\cite{IW1}*{Proposition 2.17} Let $\Lambda$ be an order over a Cohen-Macaulay local ring $R$ with canonical module $\omega_R$. The following are equivalent:
\begin{itemize} \item[(i)] $\Lambda$ is non-singular.
\item[(ii)] $\Lambda$ is a Gorenstein order and of finite global dimension.
\end{itemize}
\label{IWGor}
\end{theorem}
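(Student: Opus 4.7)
The plan is to exploit the noncommutative Auslander--Buchsbaum formula for orders: for a local Cohen--Macaulay ring $(R,\m)$, a module-finite $R$-algebra $\Lambda$, and a finitely generated $\Lambda$-module $M$ with $\pd_\Lambda M < \infty$, one has
\[
\pd_\Lambda M + \depth_R M = \depth R = d.
\]
This together with the fact that $\omega_\Lambda = \Hom_R(\Lambda,\omega_R)$ is MCM over $R$ whenever $\Lambda$ is (since $\Hom_R(-,\omega_R)$ is a duality on MCM modules) is the engine driving both implications. I would also invoke the earlier Proposition 2.17 so that non-singularity need only be checked at maximal ideals, which in the local case is the sole maximal ideal $\m$; thus (i) is equivalent to the single condition $\gldim \Lambda = d$.

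For (i)$\Rightarrow$(ii): from $\gldim \Lambda = d$ finite global dimension is immediate. Since $\Lambda$ is MCM over $R$, the module $\omega_\Lambda$ is MCM too, so $\depth_R \omega_\Lambda = d$. Because $\gldim \Lambda < \infty$ we have $\pd_\Lambda \omega_\Lambda < \infty$, and Auslander--Buchsbaum forces $\pd_\Lambda \omega_\Lambda = d - d = 0$, so $\omega_\Lambda$ is projective.

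For (ii)$\Rightarrow$(i): reduce to showing $\gldim \Lambda = d$ (the Gorenstein-order property and finite global dimension both localize, since $\omega_R$, $\Hom_R(\Lambda,\omega_R)$, and projectivity all commute with localization, so it is enough to verify the equality at $\m$). The upper bound $\gldim \Lambda \le d$ follows by applying Auslander--Buchsbaum to an arbitrary finitely generated $\Lambda$-module $M$ (allowed since $\gldim \Lambda < \infty$): $\pd_\Lambda M = d - \depth_R M \le d$. For the lower bound, take any simple $\Lambda$-module $S$; since $\Lambda/\m\Lambda$ is finite-dimensional over the residue field, $\m$ annihilates $S$, so $\depth_R S = 0$ and Auslander--Buchsbaum gives $\pd_\Lambda S = d$, whence $\gldim \Lambda \ge d$.

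The only real subtlety is justifying the noncommutative Auslander--Buchsbaum formula in the generality needed; this is standard once one knows that projective $\Lambda$-modules are MCM over $R$ (being $R$-summands of finite free modules), so the ``obstacle'' is mostly bookkeeping rather than genuine difficulty. The Gorenstein-order hypothesis enters precisely at one point---guaranteeing that the dualizing module $\omega_\Lambda$ has the same projective-dimension-zero behaviour that $\Lambda$ itself does---and this is what upgrades ``finite global dimension'' to ``global dimension equal to $\dim R$,'' which is the content of non-singularity in the local CM setting.
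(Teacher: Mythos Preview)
The paper does not prove this statement---it is quoted from \cite{IW1}---so there is nothing in the paper to compare your argument against. On its own merits, however, the proof has a real gap.

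Your engine is the noncommutative Auslander--Buchsbaum \emph{equality} $\pd_\Lambda M + \depth_R M = d$ for an order $\Lambda$, but for orders only the inequality $\pd_\Lambda M + \depth_R M \ge d$ holds in general; the depth-lemma induction you allude to proves exactly this direction and no more. The equality can fail: Example~\ref{scroll} in this very paper exhibits an order $\Gamma$ over a $3$-dimensional Cohen--Macaulay ring with $\gldim\Gamma = 4$, so some simple $\Gamma$-module $S$ has $\pd_\Gamma S = 4$ and $\depth_R S = 0$, violating $4+0=3$. A smaller witness: upper-triangular $2\times 2$ matrices over $k[[t]]$ are free of rank~$3$ over $k[[t]]$, hence an order, yet have global dimension $2$ over a base of dimension~$1$.

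This breaks your argument for (ii)$\Rightarrow$(i). The valid inequality does yield the lower bound $\gldim\Lambda\ge d$ (for $S$ simple, $\depth_R S=0$ gives $\pd_\Lambda S\ge d$), but your upper bound ``$\pd_\Lambda M = d-\depth_R M\le d$'' invokes precisely the unavailable direction. Notice too that your written proof of (ii)$\Rightarrow$(i) never actually uses the Gorenstein-order hypothesis, even though your closing paragraph asserts that this is where it enters; without that hypothesis the implication is simply false, again by the scroll. The genuine role of the hypothesis is as follows: the duality $\Hom_R(-,\omega_R)$ on MCM modules carries ${}_\Lambda\Lambda$ to ${}_\Lambda\omega_\Lambda$ and matches indecomposable summands bijectively, so projectivity of $\omega_\Lambda$ forces $\add{}_\Lambda\omega_\Lambda=\add{}_\Lambda\Lambda$. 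Then for any $\Lambda$-module $N$ which is MCM over $R$ one has $\Ext^i_\Lambda(N,\Lambda)\cong\Ext^i_\Lambda(N,\omega_\Lambda)\cong\Ext^i_R(N,\omega_R)=0$ for $i>0$ (the middle isomorphism as in Lemma~\ref{lem1} with $\omega_R$ in place of $R$), and together with $\pd_\Lambda N<\infty$ this forces $N$ to be projective. Since the $d$-th $\Lambda$-syzygy of any finitely generated module is MCM, $\gldim\Lambda\le d$ follows. Your (i)$\Rightarrow$(ii) has the same defect: concluding $\pd_\Lambda\omega_\Lambda=0$ from $\depth_R\omega_\Lambda=d$ via Auslander--Buchsbaum again uses the missing direction of the equality, so an independent argument is required there as well.
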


\begin{remark} In both Examples \ref{scroll} and \ref{2veron} we examine the ring $\End_R(R\oplus \omega)$. In Example \ref{scroll} $\End_R(R\oplus \omega)$ is not  a Gorenstein order, but in Example \ref{2veron} $\End_R(R\oplus\omega)$ is non-singular, and hence is Gorenstein. In view of these examples, the following question is motivated.

\end{remark}

\begin{question} For a Cohen-Macaulay normal domain $R$ with canonical module $\omega$, when is \mbox{$\End_R(R\oplus \omega)$} a Gorenstein order of finite global dimension? \label{question} \end{question}

The following theorem will allow us to provide a partial answer by specializing to $I=\omega$. 


\begin{theorem} Suppose $R$ is a henselian local ring and $I$ is an indecomposable ideal which contains a nonzerodivisor. Then $\End_R(R\oplus I) \cong \Hom_R(\End_R(R \oplus I),I)$ as left $\End_R(R\oplus I)$-modules if and only if $I\cong I^*$ as $R$-modules. \label{main3}

\end{theorem}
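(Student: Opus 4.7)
The plan is to exploit the idempotent matrix structure of $\Lambda := \End_R(R \oplus I)$. Let $e_1, e_2 \in \Lambda$ be the orthogonal idempotents projecting $M := R \oplus I$ onto $R$ and $I$ respectively, so that $\Lambda$ assumes the matrix form
\[
\Lambda = \begin{pmatrix} R & I^* \\ I & \End_R(I) \end{pmatrix},
\]
with $e_1\Lambda e_1 = R$ and $e_2\Lambda e_2 = \End_R(I)$. As a left $\Lambda$-module, $\Lambda = \Lambda e_1 \oplus \Lambda e_2$; and $\Hom_R(\Lambda, I)$, equipped with its left $\Lambda$-action $(\lambda \cdot f)(x) := f(x\lambda)$, decomposes as $\Hom_R(e_1\Lambda, I) \oplus \Hom_R(e_2\Lambda, I)$.

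For the forward direction, I will apply the exact idempotent functor $e_1(-) = \Hom_\Lambda(\Lambda e_1, -)$ to the assumed isomorphism $\Lambda \cong \Hom_R(\Lambda, I)$ to extract an isomorphism of left $e_1\Lambda e_1 = R$-modules $e_1\Lambda \cong \Hom_R(\Lambda e_1, I)$. Direct computation identifies the two sides as $e_1\Lambda = \Hom_R(M, R) = R \oplus I^*$ and $\Hom_R(\Lambda e_1, I) = \Hom_R(M, I) = I \oplus \End_R(I)$, producing the $R$-module isomorphism $R \oplus I^* \cong I \oplus \End_R(I)$. Since $R$ is henselian, Krull--Schmidt applies to finitely generated $R$-modules; each of $R, I, I^*, \End_R(I)$ is rank-one torsion-free (viewed inside the total quotient ring) and hence indecomposable, with $\End_R(I)$ local because $I$ is indecomposable in the henselian setting. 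Matching indecomposable summands forces two cases: either $R \cong I$, so $I$ is principal and $I \cong R \cong I^*$ trivially; or $R \cong \End_R(I)$ together with $I \cong I^*$. In either case $I \cong I^*$.

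For the backward direction, I will fix an $R$-module isomorphism $\tau\colon I \xrightarrow{\sim} I^*$ and extend by the identity on $R$ to obtain $\tilde\tau\colon M \xrightarrow{\sim} M^*$; since $M$ is then reflexive, this induces an isomorphism of $R$-algebras $\Lambda \cong \Lambda^{\op}$. Using $\tau$, I will construct compatible summand-wise $R$-module identifications $\Hom_R(e_1\Lambda, I) = I \oplus \Hom_R(I^*, I) \cong I^* \oplus \End_R(I) = \Lambda e_2$, and symmetrically $\Hom_R(e_2\Lambda, I) \cong \Lambda e_1$. Assembling the direct sum, and checking (via the anti-involution $\Lambda \cong \Lambda^{\op}$) that the summand identifications transport the left $\Lambda$-action correctly, produces the desired left $\Lambda$-module isomorphism $\Lambda \cong \Hom_R(\Lambda, I)$.

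The main obstacle lies in the backward direction: the summand-wise $R$-module identifications induced by $\tau$ are essentially immediate, but verifying that they glue into an honest left $\Lambda$-module isomorphism, rather than only an $R$-module isomorphism, requires careful bookkeeping of the $\Lambda$-action through the anti-involution of $\Lambda$ coming from the self-duality $M \cong M^*$.
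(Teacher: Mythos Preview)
Your forward direction is close to the paper's but applies Krull--Schmidt at a different level, and this creates a gap. The paper decomposes $\Lambda$ and $\Hom_R(\Lambda,I)$ into their \emph{columns} as left $\Lambda$-modules (namely $\Lambda e_1,\Lambda e_2$ and $\Hom_R(e_1\Lambda,I),\Hom_R(e_2\Lambda,I)$), applies Krull--Schmidt there---where indecomposability of $\Lambda e_i$ is immediate from primitivity of $e_i$---and only \emph{afterwards} applies $e_1(-)$ to read off an $R$-module isomorphism between top entries. You instead apply $e_1(-)$ first, landing in $R$-modules, and then invoke Krull--Schmidt on $R\oplus I^*\cong I\oplus\End_R(I)$. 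That requires $I^*$ and $\End_R(I)$ to be indecomposable as $R$-modules, which you assert but do not justify: your remark that $\End_R(I)$ is a local ring concerns ring-theoretic locality, not $R$-module indecomposability, and ``rank-one torsion-free hence indecomposable'' is valid over a domain but the theorem does not assume $R$ is a domain. The paper's ordering of the two steps sidesteps this entirely.

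Your backward direction has a more substantive problem. You propose to establish $\Hom_R(e_1\Lambda,I)\cong\Lambda e_2$ and $\Hom_R(e_2\Lambda,I)\cong\Lambda e_1$ as left $\Lambda$-modules and then assemble. The first identification is plausible via $\tau$, but the second, already at the level of underlying $R$-modules, reads
\[
\End_R(I)\oplus\Hom_R(\End_R(I),I)\ \cong\ R\oplus I,
\]
and hence forces $\End_R(I)\cong R$. Equivalently, since $\Lambda e_1\cong M$ and $\Hom_R(e_2\Lambda,I)\cong\Hom_R(\Hom_R(M,I),I)$, your identification is exactly the statement that $M$ is $I$-reflexive; for the summand $R$ this is the assertion that the inclusion $R\hookrightarrow\End_R(I)$ is an isomorphism. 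Nothing in the hypotheses guarantees that, so the summand-wise plan cannot succeed in the stated generality, and the anti-involution coming from $M\cong M^*$ does not supply it. The paper avoids the issue altogether: it fixes $\phi\colon I^*\xrightarrow{\sim} I$, writes down the single element $F=[\,0\ \ \phi\ \ 1\ \ 0\,]\in\Hom_R(\Lambda,I)$, and shows by a direct four-component computation that $\lambda\mapsto\lambda\cdot F$ is an isomorphism $\Lambda\to\Hom_R(\Lambda,I)$. No column-by-column matching, and no appeal to $\End_R(I)\cong R$, is needed.
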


\begin{proof} ($\Rightarrow$) Let $\Lambda=\End_R(R\oplus I)$.  We see, $$\Lambda=\begin{pmatrix} \Hom_R(R,R) & \Hom_R(I,R) \\  \Hom_R(R,I) &  \Hom_R(I,I) \end{pmatrix} \cong \begin{pmatrix} R & I^* \\ I & \End_R(I) \end{pmatrix}.$$ 

\vspace{.5cm}

The bimodule structure on $\Hom_R(\Lambda,I)$ is given by taking $\Hom_R(-,I)$ in each component and taking the transpose. Thus we have $$\Hom_R(\Lambda,I) = \begin{pmatrix} I & \End_R(I) \\  \Hom_R(I^*,I) &  I \end{pmatrix}.$$

Since $R$ and $I$ are indecomposable, when we decompose as left modules, we take the column vectors. Thus if $\Lambda \cong \Hom_R(\Lambda,I)$ one of the following must hold
 \begin{align*}
\begin{pmatrix} I  \\ \Hom_R(I^*,I) \end{pmatrix} \cong & \begin{pmatrix} R \\ I\end{pmatrix} \\  \begin{pmatrix}   I \\\Hom_R(I^*,I) \end{pmatrix} \cong & \begin{pmatrix} I^* \\ \End_R(I)\end{pmatrix}.
\end{align*}
Thus, either $I\cong R$ or $I\cong I^*$. In either case, the result holds.

\vspace{.5cm}

($\Leftarrow$) We wish to show that, $\Hom_R(\Lambda,I)$ is a free left $\Lambda$-module. We identify 
$$\Lambda=\begin{pmatrix} \Hom_R(R,R) & \Hom_R(I,R) \\  \Hom_R(R,I) &  \Hom_R(I,I) \end{pmatrix} \cong \begin{pmatrix} R & I^* \\ I & \End_R(I) \end{pmatrix}$$ as a ring, and $\Lambda =R \oplus I^* \oplus I \oplus \End_R(I)$ as an $R$-module. This allows us to identify $$\Hom_R(\Lambda,I) = \Hom_R(R,I) \oplus \Hom_R(I^*,I) \oplus \Hom_R(I,I) \oplus \Hom_R(\End_R(I),I)$$ where we see that the action of $\Lambda$ on $\Hom_R(\Lambda,I)$ is given by $(\lambda \cdot g) (\eta)=g(\eta \cdot\lambda)$ for $\lambda \in \Lambda$. 

We choose an isomorphism $\phi:I^* \to I$ and show that $f=\begin{bmatrix} 0 & \phi & 1 & 0 \end{bmatrix}$ is a basis for the left $\Lambda$-module $\Lambda^I$. Indeed, suppose we have a map $g\in \Hom_R(\Lambda, I)$, i.e., \[g=\begin{bmatrix} g_1 & g_2 & g_3 & g_4 \end{bmatrix} \in \Hom_R(R,I) \oplus \Hom_R(I^*,I) \oplus \Hom_R(I,I) \oplus \Hom_R(\End_R(I),I).\] We wish to show that there is a \[\lambda=\begin{pmatrix} \lambda_1 & \lambda_2 \\ \lambda_3 & \lambda_4\\ \end{pmatrix} \in \Lambda\] so that \begin{equation}g(\eta)=\lambda \cdot f(\eta)=f(\eta\cdot\lambda) \label{eq3} \end{equation} for all $\eta\in \Lambda$ and $f=\begin{bmatrix} 0 & \phi & 1 & 0 \end{bmatrix}$ . Set $$\eta= \begin{pmatrix} r & f \\ a & \sigma \end{pmatrix}$$ for $r\in R$, $f\in I^*$, $a\in I$ and $\sigma\in \End_R(I)$. Computing the left hand side of (\ref{eq3}) we get

$$g(\eta)=g_1(r)+g_2(f)+g_3(a)+g_4(\sigma).$$ The right hand side becomes 

\begin{align*}
f(\eta \cdot \lambda)=&f \begin{pmatrix} r\lambda_1+ f \lambda_3 & r\lambda_2+ f \lambda_4 \\ a\lambda_1+ \sigma \lambda_3 & a\lambda_2+ \sigma \lambda_4\\ \end{pmatrix} \\
=& \phi(r\lambda_2+ f \lambda_4) + a(\lambda_1)+ \sigma (\lambda_3) \\
=& \phi(r \lambda_2)+\phi(f \lambda_4)+  a(\lambda_1)+ \sigma (\lambda_3). \\
\end{align*} The only choice we have to make is that of $\lambda_1\in R, \lambda_2\in I^*, \lambda_3 \in I,$ and $\lambda_4\in \End_R(I)$. We note that since $I$ contains a nonzerodivisor, we have that $\End_R(I)$ is contained in the total quotient ring of $R$ and thus that every $R$-linear morphism is also $\End_R(I)$-linear. It follows at once that $\Hom_R(\End_R(I),I)\cong I$. Now, since $g_4 \in \Hom_R(\End_R(I),I)\cong I$ it is really just multiplication by $g_4(1) \in I$ and hence we have $g_4(\sigma)=\sigma g_4(1)$ and so we can simply choose $\lambda_3=g_4(1)\in I$. The same argument works for choosing $\lambda_1=g_3(1)\in \End_R(I)$ since $\End_R(I)$ is actually contained in the total quotient ring of $R$. Since $r\in R$ we have that $\phi(r \lambda_2)=\phi(\lambda_2)r$; but, $\phi$ is an isomorphism, so we can choose $\lambda_2=\phi^{-1}(g_1(1))$ so that $\phi(\lambda_2)=g(1)$ and as before $\phi(\lambda_2 r)=\phi(\lambda_2)r=g_1(1)r=g_1(r)$. Similarly we have that $\phi(f\lambda_4)=\lambda_4 \phi(f)$ since $\lambda_4 \in \End_R(I)$, but then we choose $\lambda_4=g_2\phi^{-1}\in \Hom_R(I,I)$. It follows that $\phi(f\lambda_4)=\lambda_4\phi(f)=g_2\phi^{-1}\phi(f)=g_2(f)$ and thus we have $$\phi(r \lambda_2)+\phi(f \lambda_4)+  a(\lambda_1)+ \sigma (\lambda_3) = g_1(r)+g_2(f)+g_3(a)+g_4(\sigma).$$ This concludes the proof.\end{proof}

\begin{cor}  Suppose $R$ is a CM henselian generically Gorenstein ring with canonical module $\omega$. Then  $\End_R(R \oplus \omega)$ is a Gorenstein $R$-order if and only if $\omega \cong \omega^*$. In particular if $R$ is a CM local domain, then this is further equivalent to $[\omega]$ having order 2 in the divisor class group of $R$.

\label{MainCor}
\end{cor}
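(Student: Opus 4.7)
The plan is to specialize Theorem \ref{main3} with $I = \omega$ and then pass between the condition ``$\omega_\Lambda \cong \Lambda$ as left $\Lambda$-modules'' and the stronger-looking condition ``$\omega_\Lambda$ is projective as a left $\Lambda$-module''. The hypotheses of that theorem are met: the canonical module over a local Cohen--Macaulay ring is indecomposable, and the generically Gorenstein hypothesis is exactly what lets us realize $\omega$ as an ideal containing a nonzerodivisor. Writing $\Lambda = \End_R(R \oplus \omega)$ and $\omega_\Lambda = \Hom_R(\Lambda, \omega)$, Theorem \ref{main3} gives $\omega_\Lambda \cong \Lambda$ as left $\Lambda$-modules if and only if $\omega \cong \omega^*$. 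The backward direction of the corollary is then immediate: if $\omega \cong \omega^*$ then $\omega_\Lambda \cong \Lambda$ is free of rank one, hence projective, so $\Lambda$ is a Gorenstein order.

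For the forward direction, assume $\omega_\Lambda$ is projective. Because $R$ is henselian, $\Lambda$ is semiperfect and Krull--Schmidt holds for finitely generated left $\Lambda$-modules; the indecomposable projectives are $\Lambda e_1$ and $\Lambda e_2$, where $e_1, e_2$ are the diagonal idempotents cutting out $R$ and $\omega$. From the matrix description used in the proof of Theorem \ref{main3}, $\omega_\Lambda$ splits as a left $\Lambda$-module into two ``columns'', each of which is a direct summand of $\omega_\Lambda$ and hence itself projective. Passing to the total quotient ring $K$, one has $\Lambda_K \cong M_2(K)$ and $\omega_K \cong K$, so each column of $\omega_\Lambda$ has generic $R$-rank $2$, the same as each $\Lambda e_i$. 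By Krull--Schmidt each column must therefore be isomorphic to exactly one of $\Lambda e_1$ or $\Lambda e_2$. Reading off the top entry of that isomorphism (equivalently, applying $e_1 \cdot (-)$, whose image is a module over $e_1 \Lambda e_1 \cong R$) produces an $R$-module isomorphism $\omega \cong R$ or $\omega \cong \omega^*$; in the first case $\omega \cong R \cong R^* \cong \omega^*$, so in either case $\omega \cong \omega^*$.

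For the ``in particular'' statement, over a local Cohen--Macaulay domain $\omega$ is a reflexive divisorial ideal representing a class $[\omega] \in \Cl(R)$, and the natural evaluation pairing $\omega \otimes_R \omega^* \to R$ identifies $[\omega^*] = -[\omega]$ in the divisor class group. Hence $\omega \cong \omega^*$ is equivalent to $2[\omega] = 0$, i.e.\ to $[\omega]$ being $2$-torsion. The main obstacle in the above is the forward direction of the first equivalence: one must rule out ``exotic'' projective decompositions of $\omega_\Lambda$, and the generic rank computation at $K$ is the decisive step that forces each of its columns to be an indecomposable projective, after which the case analysis mirrors exactly the one in the proof of Theorem \ref{main3}.
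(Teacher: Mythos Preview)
Your argument is correct and follows the same overall strategy as the paper --- invoke Theorem \ref{main3} for the easy direction, and for the forward direction use Krull--Schmidt over the semiperfect ring $\Lambda$ together with a generic-rank count to force $\omega_\Lambda$ to decompose into copies of the two indecomposable projectives --- but your organization is genuinely tidier at one point. The paper writes $\omega_\Lambda\cong P_i\oplus P_j$ and then runs a three-case analysis; the case $\omega_\Lambda\cong P_2^2$ only yields $\Hom_R(\omega^*,\omega)\cong R$ as an $R$-module, and the paper then has to invoke $\omega$-reflexivity of $\omega^*$ (via \cite{Hartshorne}*{Lemma 1.5}) to recover $\omega^*\cong\omega$. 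You instead first split $\omega_\Lambda=\Hom_R(e_1\Lambda,\omega)\oplus\Hom_R(e_2\Lambda,\omega)$ as left $\Lambda$-modules using the right decomposition $\Lambda=e_1\Lambda\oplus e_2\Lambda$, observe that each summand is a rank-$2$ projective and hence equals some $\Lambda e_i$, and then apply $e_1\cdot(-)$ to the first summand: since $e_1\cdot\Hom_R(e_1\Lambda,\omega)\cong\Hom_R(e_1\Lambda e_1,\omega)=\omega$ while $e_1\Lambda e_1\cong R$ and $e_1\Lambda e_2\cong\omega^*$, you get $\omega\cong R$ or $\omega\cong\omega^*$ in one stroke, with no need for the reflexivity lemma. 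That is a real simplification.

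One small remark on the ``in particular'' clause: you correctly identify the condition as $2[\omega]=0$, i.e.\ $[\omega]$ is $2$-torsion. The paper states it as ``order $2$'', which taken literally would exclude the Gorenstein case $[\omega]=0$; your phrasing is the accurate one, though in the paper's later applications the non-Gorenstein hypothesis is always in force, so the distinction does not matter there.
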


\begin{proof} $(\Leftarrow):$ this is a direct application of Theorem \ref{main3}.
 
$(\Rightarrow):$ We must address the fact that $\Lambda$ being a Gorenstein order may not imply that $\Hom_R(\Lambda,\omega)\cong\Lambda$ but only that $\Hom_R(\Lambda,\omega)$ is a summand of a finite sum of copies of $\Lambda$. We note that as $R$-modules \begin{align*}
	\Lambda\cong & \ R^2 \oplus \omega \oplus \omega^* \\
    \Hom_R(\Lambda,\omega)\cong & \ \omega^2\oplus R \oplus \Hom_R(\omega^*,\omega).
\end{align*}

Since the functor $\Hom_R(R\oplus\omega,-):\add(R\oplus\omega) \to \text{proj}\Lambda$ is an equivalence, we have that the indecomposable projectives $\Lambda$-modules are $P_1=\Hom_R(R\oplus\omega,R)$ and $P_2=\Hom_R(R\oplus\omega,\omega)$. As $R$-modules, we have 
\begin{align*}
P_1\cong & \ R\oplus \omega^* \\
P_2\cong & \ R\oplus \omega.\end{align*} By considering ranks, we see $\Hom_R(\Lambda,\omega)\cong P_i\oplus P_j$ for $i,j\in\{1,2\}$. In the case that $\Hom_\Lambda(\Lambda,\omega)$ is $P_1^2$ or $P_1\oplus P_2$ it is clear that either $R$ is Gorenstein or  $\omega^*\cong \omega$ since $R$, $\omega$, and $\omega^*$ are all rank one $R$-modules; in either case, the result holds. Thus, we must only deal with the case \[\Hom_\Lambda(\Lambda,\omega)\cong P_2^2\] so that we deduce $\Hom_R(\omega^*,\omega)\cong R$. But, as $R$ is CM, we have that $R$ satisfies Serre's condition $(S_1)$ and we know $\omega^*$ satisfies $(S_2)$ as it is the dual of a finitely generated $R$-module. Thus, we know $\omega^*$ is $\omega$-reflexive by \cite{Hartshorne}*{Lemma 1.5}, and we see \[\omega^*\cong \Hom_R(\Hom_R(\omega^*, \omega),\omega) \cong \Hom_R(R,\omega) \cong \omega. \qedhere \] \end{proof}

\subsection{Examples}

Since we now have a criterion on the canonical module, we will consider some invariant subrings under actions by cyclic groups, where the order of $\omega_R$ is easily computed. We start with a theorem of Weston \cite{Weston}. Throughout we assume $k$ is algebraically closed of characteristic 0.  

\begin{Hypo} Let $S=k[x_1, \dots, x_n]$ and $G$ a finite  subgroup of $GL_n(k)$ with generators $g_1,\dots,g_t$ which acts linearly on the variables. For each $j=1,\dots,t$ let $\zeta_j$ be a primitive $|g_j|^{th}$ root of unity in $k$. Then for each $j$ there exists a basis for $kx_1 \oplus \dots \oplus kx_n$ so that $$g_j=\begin{bmatrix} \zeta_j^{a_{1j}} & 0 & \dots & 0 \\ 0 & \zeta_j^{a_{2j}} & \dots & 0 \\
0 & \dots & \zeta_j^{a_{3j}} &  0 \\ \vdots & & \ddots & \vdots\\ 0  & \dots &  0 & \zeta_j^{a_{nj}} \\ \end{bmatrix}$$ for integers $a_{ij}$ with $1\leq a_{ij} < |g_j|$ for $j=1,\dots,t$. Set $d_{ij}=\gcd(a_{1j}\dots,\widehat{a_{ij}},\dots a_{nj},|g_j|)$ and $m_j$ the least integer so that $m_j\sum_{i=1}^n d_{ij}a_{ij}=0$ mod$|g_j|$. Let $R=S^G$. When $G$ is cyclic (i.e., t=1), we will suppress the use of $j$, as it is not needed. 
\label{hypoWeston}

\end{Hypo}



\begin{theorem}\cite{Weston}*{Theorem 2.2} With Hypotheses \ref{hypoWeston} the class $[\omega]$ in $\text{Cl}(R)$ of the canonical $R$ module $\omega$ has order $m:=\text{l.c.m}(m_1,\dots,m_t)$

\label{West1}
\end{theorem}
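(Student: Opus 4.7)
The plan is to translate the computation of the order of $[\omega_R]$ in $\Cl(R)$ into a character-theoretic problem, and then carry the calculation out one generator at a time. The main input is the standard description of the divisor class group of a linear invariant ring (due to Nakajima, following Samuel and Benson): letting $H \triangleleft G$ denote the normal subgroup generated by pseudo-reflections, the Chevalley--Shephard--Todd theorem implies $S^H$ is polynomial and $G/H$ acts on it without pseudo-reflections. The assignment $\chi \mapsto [S_\chi]$, sending a character to the class of its semi-invariants $S_\chi = \{f \in S : g \cdot f = \chi(g)f\}$, then gives an isomorphism $\widehat{G/H} \cong \Cl(R)$, under which the class of $\omega_R$ corresponds to the character $\det^{-1}$ of the representation on $kx_1 \oplus \dots \oplus kx_n$. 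Consequently, the order of $[\omega_R]$ in $\Cl(R)$ equals the order of $\det$ as a character of $G/H$.

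The next step is to analyze $H \cap \langle g_j \rangle$ in the basis in which $g_j = \diag(\zeta_j^{a_{1j}}, \ldots, \zeta_j^{a_{nj}})$. A direct calculation shows $g_j^k$ is a pseudo-reflection fixing $\{x_i = 0\}$ if and only if $|g_j|$ divides $k a_{\ell j}$ for every $\ell \ne i$, equivalently when $k$ is a multiple of $|g_j|/d_{ij}$. This identifies the pseudo-reflections inside $\langle g_j \rangle$, and a further computation shows that $\det^m$ is trivial on the image of $g_j$ in $G/H$ exactly when $m \sum_i d_{ij}a_{ij} \equiv 0 \pmod{|g_j|}$, which is precisely the defining condition for $m_j$. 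Since $G$ is generated by $g_1,\ldots,g_t$ and a character of $G/H$ is trivial if and only if it is trivial on the image of each generator, the order of $\det$ on $G/H$ is $\operatorname{lcm}(m_1, \ldots, m_t)$, which matches the claimed order of $[\omega_R]$.

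The main obstacle is that the generators $g_j$ are diagonalized in potentially different bases, so the subgroup $H$ cannot be read off in a single fixed frame. The key point that requires careful justification is that the contribution of $g_j$ to the order of $\det$ on $G/H$ is captured entirely by the pseudo-reflections lying inside $\langle g_j \rangle$ itself — that is, by the numbers $d_{ij}$ computed in the basis adapted to $g_j$ — and that no cross-interaction with pseudo-reflections coming from other generators affects this accounting. Once this localisation is established, the generator-wise calculation of $m_j$ and the lcm assembly immediately complete the proof.
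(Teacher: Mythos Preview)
The paper does not give a proof of this statement at all: Theorem~\ref{West1} is quoted from Weston \cite{Weston}*{Theorem 2.2} and used as a black box, with no argument supplied. So there is no ``paper's own proof'' to compare your sketch against.

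That said, your outline has two genuine problems worth flagging. First, the sentence ``the class of $\omega_R$ corresponds to the character $\det^{-1}$\,'' followed by ``the order of $\det$ as a character of $G/H$'' is not quite coherent: pseudo-reflections have non-trivial determinant, so $\det$ does \emph{not} descend to a character of $G/H$. What is true is that $\omega_R \cong S_{\det^{-1}}$ and that the surjection $\widehat{G^{\mathrm{ab}}} \twoheadrightarrow \Cl(R)$ sends $\det^{-1}$ to $[\omega_R]$; the kernel of this surjection is generated by certain characters attached to the reflecting hyperplanes, and it is the order of the \emph{image} of $\det^{-1}$ in that quotient you must compute. Your step~6 (``a further computation shows that $\det^m$ is trivial on the image of $g_j$ in $G/H$ exactly when $m\sum_i d_{ij}a_{ij}\equiv 0$'') is therefore not yet a computation but a restatement of what has to be proved, and it hides exactly the passage from $\sum_i a_{ij}$ to $\sum_i d_{ij}a_{ij}$ that the pseudo-reflection quotient is responsible for.

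Second, the obstacle you yourself identify at the end is the crux, not a technicality. For non-cyclic $G$ the subgroup $H$ can contain pseudo-reflections that do not lie in any single $\langle g_j\rangle$, and conversely the image of $g_j$ in $G/H$ can be smaller than its image in $\langle g_j\rangle/(H\cap\langle g_j\rangle)$. Until you justify why the order of $[\omega_R]$ is controlled generator-by-generator via the $d_{ij}$ computed in each separate diagonalizing basis, the lcm assembly is unjustified. Weston's original argument handles this point; your sketch does not yet.
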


\begin{cor} Let $S=k[x_1,\dots,x_n]$ (or, $k[[x_1,\dots,x_n]]$ ) and $G\subset \text{GL}(n,k)$ be a finite subgroup acting linearly on the variables and set $R=S^G$. If $\End_R(R\oplus \omega)$ is a Gorenstein $R$-order, then $G$ is of even order. 
\end{cor}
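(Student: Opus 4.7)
The plan is to combine Corollary \ref{MainCor}, which reduces the Gorenstein-order property to a divisor-class condition, with Weston's Theorem \ref{West1}, which computes the order of $[\omega]$ in terms of the generators of $G$, and then to finish with an elementary number-theoretic observation.

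First I would observe that $R = S^G$ is a Cohen-Macaulay normal domain (being a direct summand of $S$), so that after passing to the henselization if necessary, the hypotheses of Corollary \ref{MainCor} are satisfied. That corollary gives that $\End_R(R\oplus\omega)$ is a Gorenstein $R$-order if and only if $\omega \cong \omega^*$, equivalently $[\omega]$ has order dividing $2$ in $\mathrm{Cl}(R)$.

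Next, Theorem \ref{West1} identifies the order of $[\omega]$ as $m = \mathrm{lcm}(m_1,\dots,m_t)$, where the $m_j$ are as in Hypotheses \ref{hypoWeston}. Our assumption therefore forces each $m_j \in \{1,2\}$. In the interesting case where $R$ is not Gorenstein, the order of $[\omega]$ is exactly $2$ and so some $m_j = 2$ (the Gorenstein case is degenerate, since then $R\oplus\omega\cong R^2$ and $\End_R(R\oplus\omega)$ is trivially Morita equivalent to $R$).

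The main step is then showing that $m_j = 2$ forces $|g_j|$ to be even. By the definition of $m_j$, it is the least positive integer with $|g_j|\mid m_j s_j$, where $s_j = \sum_i d_{ij}a_{ij}$. Thus $|g_j|$ divides $2s_j$ but not $s_j$. If $|g_j|$ were odd, then $\gcd(|g_j|,2)=1$ would force $|g_j|\mid s_j$, contradicting minimality. Hence $|g_j|$ is even, and by Lagrange's theorem $|G|$ is even as well. The only mild obstacle I anticipate is aligning the hypotheses of Corollary \ref{MainCor} with the graded invariant-ring setting (henselization, generically Gorenstein); once that bookkeeping is done, the concluding arithmetic is essentially immediate.
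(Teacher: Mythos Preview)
Your proposal is correct and follows essentially the same route as the paper: apply Corollary~\ref{MainCor} to get $[\omega]$ of order dividing $2$, invoke Weston's Theorem~\ref{West1} to obtain some $m_j=2$, and then use the elementary divisibility argument to force $|g_j|$ even. The paper's arithmetic step is phrased as ``$2$ is prime, so it divides $l$ or $|g_1|$, and it cannot divide $l$ by minimality,'' which is exactly your $\gcd$ argument in different clothing.

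Two minor differences worth noting. First, you are more explicit than the paper about the degenerate Gorenstein case ($|[\omega]|=1$); the paper simply asserts $|[\omega]|=2$, implicitly assuming $R$ is non-Gorenstein, whereas you flag the issue. Second, for the passage between the graded polynomial setting and the henselian hypothesis of Corollary~\ref{MainCor}, the paper cites \cite{Weston2} to reduce to the polynomial case, while you propose henselizing; either bookkeeping works, and you correctly identify this as the only real technical wrinkle.
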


\begin{proof} 

First we note that by \cite{Weston2}, it suffices to treat the polynomial ring case. We adopt the notation of Hypotheses \ref{hypoWeston}. By Theorem \ref{West1} the order of $\omega$ is $\text{lcm}(m_1, \dots , m_t)$. Now,  Corollary \ref{MainCor} says that if $\End_R(R\oplus \omega)$ is a Gorenstein algebra, then we have $|[\omega]|=2$, since $R$ is normal. This means at least one $m_i=2$, call it $m_1$. Then we have that $m_1\sum_{j=1}^n a_{1j}d_{1j}=l|g_1|$. But as 2 is prime, it must divide $l$ or $|g_1|$. It cannot divide $l$ as then a smaller integer would be chosen instead of $m_1$. Thus it must be that $|g_1|$ is even, and hence $G$ must be of even order.\end{proof}

\begin{remark} Note that the converse to this is not true, since it is possible to have $|G|$ even, but the order of the canonical module not be 2. For example, let $R=k[[x_1,x_2, x_3, x_4]]^{(2)}=k[[x_ix_j]]_{1\leq i\leq j\leq 4}$, the second Veronese subring of $k[[x_1,x_2,x_3,x_4]]$.  Here $G$ is cyclic of order 2, $G\subset \text{SL}_2(k)$, and $G$ contains no psuedo-reflections, hence $R$ is Gorenstein \cite{Watanabe1}*{Theorem 1}. It follows that the order of $[\omega]$ is 1. \end{remark}
 
This gives us the ability to produce ample examples of Gorenstein orders over the power series ring in $n$ variables.

\section{Steady NCCRs and global dimension}

In this section we will suppose we are working within the conditions of \ref{hypoWeston}. We start with the following definitions, from \cite{Iyama_steady}:

\begin{definition} Let $R$ be a $d$-dimensional CM local normal domain with canonical module $\omega_R$.
\mbox{}
\begin{itemize}
\item A module $M$ is \emph{steady} if it is a generator and $\End_R(M)\in \text{add}_R M$. 
\item If $M$ is steady and $\End_R(M)$ is a noncommutative crepant resolution of $R$ then we say $\End_R(M)$ is a \emph{steady NCCR}. 
\item If $M$ is a direct sum of reflexive modules of rank one, then we call $M$ \emph{splitting}. 
\item We say $\End_R(M)$ is a \emph{splitting NCCR} if it is an NCCR and $M$ is splitting.
\item If $M=M_1\oplus \cdots \oplus M_n$ is a decomposition of $M$ into indecomposables, we say $M$ is \emph{basic} if the $M_i$ are mutually nonisomorphic.
\end{itemize}

\end{definition}

\begin{remark} Let $R$ be a $d$-dimensional CM local normal domain with canonical module $\omega_R$. We see that if the conditions of Corollary \ref{MainCor} are satisfied, then $\End_R(R\oplus \omega)\cong R\oplus R\oplus \omega \oplus \omega \in \text{add}_R(R \oplus \omega)$ so that $R \oplus \omega$ is a steady splitting module. If $R$ is not Gorenstein, then $R\oplus \omega$ is basic.
\label{splittingremark}
\end{remark}

\begin{theorem} Let $R=S^G$ be a subring of $S=k[[x_1,\dots,x_n]]$ for $k$ an algebraically closed field of characteristic zero with $G$ a small (i.e., contains no pseudo-reflections) abelian subgroup of $\text{GL}_n(k)$ and such that $[\omega]$ has order 2 in $\text{Cl}(R)$.  Then $\End_R(R\oplus\omega)$ has finite global dimension if and only if $R$ is isomorphic to a ring of the form $T[[x_{j+1},\dots,x_{n}]]$ for $j$ odd and $3 \leq j \leq n$. where $T=k[[x_1,\dots, x_j]]^{(2)}$. 
\end{theorem}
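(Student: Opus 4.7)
I treat the two directions separately. For $(\Leftarrow)$, suppose $R \cong T[[x_{j+1}, \ldots, x_n]]$ with $T = k[[x_1, \ldots, x_j]]^{(2)}$ and $j \geq 3$ odd. The adjoined variables form a regular factor on which $G = \ZZ/2$ acts trivially, so $\omega_R \cong \omega_T[[x_{j+1}, \ldots, x_n]]$ and $R \oplus \omega_R \cong (T \oplus \omega_T)[[x_{j+1}, \ldots, x_n]]$. Because $j$ is odd, the determinant character of the $\ZZ/2$-action is the nontrivial character, so $\omega_T$ identifies with the $(-1)$-isotypic component of $S' := k[[x_1, \ldots, x_j]]$, giving $T \oplus \omega_T \cong S'$ as $T$-modules. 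Hence $R \oplus \omega_R \cong k[[x_1, \ldots, x_n]] =: S$ and $\End_R(R \oplus \omega_R) \cong \End_R(S) \cong S * \ZZ/2$ by Auslander's theorem (applicable since $G$ is small in $\GL_n(k)$ even after trivial extension on the last $n-j$ variables, as $j \geq 2$). The skew group ring has global dimension $n$, which is finite.

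For $(\Rightarrow)$, assume $\End_R(R \oplus \omega)$ has finite global dimension. Simultaneously diagonalizing the abelian group $G$ (using $k$ algebraically closed of characteristic zero), I choose an eigenbasis $x_1, \ldots, x_n$ for characters $\chi_i \in G^*$. Then every rank-one reflexive $R$-module is an isotypic component $M_\chi$, $R = M_1$, and $\omega \cong M_{\det^{-1}}$ where $\det = \prod_i \chi_i$. The hypothesis that $[\omega]$ has order $2$ forces $\det$ to have order exactly $2$, so $\omega \cong M_\det$. The main combinatorial step is to show that $\End_R(M_1 \oplus M_\det)$ having finite global dimension forces $|G| = 2$. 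I would invoke the classification of splitting NCCRs over abelian quotient singularities from Iyama-Nakajima \cite{Iyama_steady}: the two-summand module $M_1 \oplus M_\det$ induces the idempotent truncation $e(S * G)e$ of the skew group ring, and the Iyama-Nakajima criterion implies this truncation cannot have finite global dimension unless every character lies in $\{1, \det\}$, i.e.\ $|G^*| = 2$.

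Once $|G| = 2$, write $G = \langle g \rangle$ with $g = \diag(\epsilon_1, \ldots, \epsilon_n)$, $\epsilon_i \in \{\pm 1\}$. Letting $j = \#\{i : \epsilon_i = -1\}$, smallness of $G$ excludes $j = 1$ (which would be a pseudo-reflection), and $\det = (-1)^j$ having order $2$ forces $j$ odd, so $j \geq 3$. After reordering the variables one obtains $R \cong k[[x_1, \ldots, x_j]]^{(2)}[[x_{j+1}, \ldots, x_n]]$, as required. The principal obstacle is the combinatorial reduction to $|G| = 2$: this is where a precise invocation of \cite{Iyama_steady} is essential. A fallback would be to exhibit, for any third character $\chi \in G^* \setminus \{1,\det\}$, a simple $\End_R(M_1 \oplus M_\det)$-module with a nonterminating projective resolution, contradicting finite global dimension.
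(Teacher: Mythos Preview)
Your $(\Leftarrow)$ direction is essentially the paper's argument: identify $R\oplus\omega$ with $S$ and invoke Auslander's theorem/skew group ring.

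The $(\Rightarrow)$ direction, however, has a real gap. The Iyama--Nakajima result you cite (Lemma~5.4 in the paper) classifies \emph{steady splitting NCCRs}: it says that over an abelian quotient singularity the unique basic splitting module giving an NCCR is $S$ itself. To feed $R\oplus\omega$ into that statement you must first know that $\End_R(R\oplus\omega)$ is an NCCR, i.e.\ a \emph{non-singular} order, not merely one of finite global dimension. These are genuinely different conditions; Example~3.1 (the $(2,1)$-scroll) exhibits $\End_R(R\oplus\omega)$ with $\gldim=4>\dim R=3$, so finite global dimension alone does not give non-singularity. Your formulation ``the truncation $e(S*G)e$ cannot have finite global dimension unless $|G^*|=2$'' is therefore not what Iyama--Nakajima actually prove, and your fallback of building an explicit infinite resolution would amount to reproving a strengthened version of their theorem from scratch.

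The paper closes this gap using its own Section~4, and this is precisely where the hypothesis $|[\omega]|=2$ does real work beyond the identification $\omega\cong M_{\det}$. Since $[\omega^*]=-[\omega]$, order~$2$ gives $\omega\cong\omega^*$; Corollary~4.5 then says $\End_R(R\oplus\omega)$ is a \emph{Gorenstein order}. Now Iyama--Wemyss (Theorem~4.2) upgrades ``Gorenstein order of finite global dimension'' to ``non-singular'', so $\End_R(R\oplus\omega)$ is an NCCR. Remark~5.2 shows it is steady, splitting, and basic, and only at this point does Lemma~5.4 apply to force $R\oplus\omega\cong S$ and hence $|G|=\rank_R S=2$. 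After that your endgame (diagonalize the generator, count $-1$'s, use smallness and $\det=-1$) matches the paper exactly.
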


Before the proof, we need the following result of Iyama and Nakajima:

\begin{lemma}\cite{Iyama_steady}
Let $R$ be a $d$-dimensional CM local normal domain.  Then the following are equivalent:

\begin{itemize}
\item $R$ is a quotient singularity associated with a finite abelian group $G\subset Gl_d(k)$ (i.e., $R=S^G$ where $S=k[[x_1,\dots,x_d]]$.) 
\item $R$ has a unique basic module giving a splitting NCCR.
\item $R$ has a steady splitting NCCR. 
\end{itemize}

In this case, $S$ is the unique basic splitting module giving an NCCR.
\label{I_steady}
\end{lemma}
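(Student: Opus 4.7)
The plan is to recast the finite-global-dimension condition in terms of steady splitting NCCRs and then apply Lemma~\ref{I_steady}. Since $[\omega]$ has order two in $\Cl(R)$, we have $\omega\cong\omega^*$, so Corollary~\ref{MainCor} makes $\Lambda:=\End_R(R\oplus\omega)$ a Gorenstein $R$-order. Theorem~\ref{IWGor} then says $\Lambda$ has finite global dimension if and only if it is non-singular, i.e.\ gives an NCCR. Because $[\omega]\neq 0$ we have $R\ncong\omega$, so $M:=R\oplus\omega$ is basic, and Remark~\ref{splittingremark} records that $M$ is a steady splitting $R$-module. Thus the statement "$\Lambda$ has finite global dimension" is equivalent to "$\End_R(M)$ is a steady splitting NCCR."

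For the forward direction, suppose $\End_R(M)$ is a steady splitting NCCR. Lemma~\ref{I_steady} identifies $S$ as the \emph{unique} basic splitting module giving an NCCR of $R$, so $R\oplus\omega\cong S$ as $R$-modules. Decompose $S=\bigoplus_{\chi\in\widehat G}S_\chi$ into $G$-isotypic components; classical results on small abelian quotient singularities (Auslander--Watanabe) realize the $S_\chi$ as a complete list of pairwise non-isomorphic indecomposable rank-one reflexive $R$-summands of $S$, and in fact $\Cl(R)\cong\widehat G$. Rank comparison ($|G|=\operatorname{rank}_R S=\operatorname{rank}_R(R\oplus\omega)=2$) gives $|G|=2$. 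The generator $g$ of $G$ satisfies $g^2=I$, so after simultaneous diagonalization $g=\diag(\epsilon_1,\dots,\epsilon_n)$ with $\epsilon_i\in\{\pm 1\}$; after reindexing let $j$ be the number of $-1$'s. Smallness of $G$ rules out $j=1$ (which would give a pseudo-reflection), so $j\geq 2$; non-Gorensteinness of $R$ (since $[\omega]\neq 0$), combined with Watanabe's theorem, forces $G\not\subset\mathrm{SL}_n(k)$, i.e.\ $\det g=(-1)^j=-1$, whence $j$ is odd and thus $j\geq 3$. Therefore $R\cong T[[x_{j+1},\dots,x_n]]$ with $T=k[[x_1,\dots,x_j]]^{(2)}$.

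Conversely, given $R\cong T[[x_{j+1},\dots,x_n]]$ of the stated form, $R=S^G$ for $G=\langle\diag(-1,\dots,-1,1,\dots,1)\rangle\cong\ZZ/2$, which is small (as $j\geq 2$) and abelian. The isotypic decomposition is $S=S_+\oplus S_-$ with $S_+=R$, and since $j$ is odd we have $\det g=-1$; Watanabe's explicit identification of $\omega_R$ as the $\det$-isotypic summand of $S$ then gives $\omega\cong S_-$, whence $S\cong R\oplus\omega$. Lemma~\ref{I_steady} now exhibits $\End_R(R\oplus\omega)=\End_R(S)$ as an NCCR, which in particular has finite global dimension. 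The main obstacle is invoking, with correct citation, the classical correspondence between $\widehat G$ and indecomposable reflexive summands of $S$ (for small abelian $G$) together with the identification $\omega_R\cong S_{\det}$; both are standard in the theory of quotient singularities but warrant precise references to pin down the rank-counting and isotypic identification used above.
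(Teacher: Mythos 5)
What you have written is not a proof of the statement you were given. The statement is Lemma~\ref{I_steady} itself: the Iyama--Nakajima equivalence between (a) $R$ being a quotient singularity $S^G$ for a finite abelian $G\subset \mathrm{GL}_d(k)$, (b) $R$ having a unique basic module giving a splitting NCCR, and (c) $R$ having a steady splitting NCCR, together with the identification of $S$ as the unique basic splitting module giving an NCCR. Your argument instead proves the paper's Theorem~5.3 (the characterization of when $\End_R(R\oplus\omega)$ has finite global dimension when $[\omega]$ has order $2$ in $\Cl(R)$), and it does so by invoking Lemma~\ref{I_steady} as a black box (``Lemma~\ref{I_steady} identifies $S$ as the unique basic splitting module giving an NCCR of $R$, so $R\oplus\omega\cong S$''). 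Read as a proof of the Lemma this is circular; read as what it actually is, it is a proof of a different result. None of the Lemma's content is established: you never show that the existence of a steady splitting NCCR forces $R$ to be an abelian quotient singularity (the hard implication), never prove uniqueness of the basic splitting module, and in the direction (a)$\Rightarrow$(c) you only gesture at the isotypic decomposition $S=\bigoplus_\chi S_\chi$ and Auslander-type results without proving that $\End_R(S)$ is a non-singular order with $S$ steady, splitting, and basic.

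For context, the paper itself offers no proof of Lemma~\ref{I_steady}; it is quoted from Iyama--Nakajima, so a correct answer would either cite that paper (as is done here) or reproduce its argument (skew group algebra $S\ast G$, Auslander's isomorphism $\End_R(S)\cong S\ast G$ for $G$ small, the decomposition of $S$ into rank-one reflexive isotypic summands for the forward direction, and their classification argument for the converse and the uniqueness). Viewed instead as a proof of Theorem~5.3, your write-up is essentially the paper's own proof of that theorem: both directions match, the only cosmetic differences being that you identify $\omega$ with the $\det$-isotypic summand of $S$ via Watanabe where the paper uses Weston's explicit computation $\omega\cong(x_j,\dots,x_n)$, and your exclusion of small $j$ (smallness rules out $j=1$, $\det\sigma=-1$ forces $j$ odd, hence $j\geq 3$) is the same as the paper's. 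But that does not answer the question that was asked.
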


\begin{proof}[Proof of Theorem 5.3] ($\Rightarrow$): Suppose that $\End_R(R\oplus \omega)$ has finite global dimension. Since $|[\omega]|=2$ we know that $\End_R(R\oplus \omega)$ is a Gorenstein $R$-order by Corollary \ref{MainCor}. Thus, by \ref{IWGor}, $\End_R(R \oplus \omega)$ is non-singular. Then, $\End_R(R\oplus \omega)$ is a noncommutative crepant resolution of $R$ since it is certainly an MCM $R$-module.  This NCCR is steady and splitting and $R\oplus\omega$ is basic by Remark \ref{splittingremark}. Thus  $R\oplus \omega \cong S$ by Lemma \ref{I_steady}. It follows from Galois theory that $|G|=\rank_RS=2$. We see immediately that $n\geq 3$ since otherwise $R$ would be Gorenstein and $|[\omega]|=1$.  Then $G=\langle \sigma \rangle$ where $\sigma^2=1$. Since $G$ is small, the order of the cyclic subgroup ${\det(G)=\{ \det(g) \ | g \in G\}} \leq k^*$ is the order of the canonical module of $R$ in the divisor class group (for reference see \cite{BH}*{Theorem 6.4.9} and the comments after Theorem 1 in \cite{Watanabe2}). Since $|[\omega]|=2$ we must have that $\det(\sigma)=-1$. Further, as $|\sigma|=2$, the minimal polynomial of $\sigma$ is $(t-1)(t+1)$ and so $\sigma$ is diagonalizable with eigenvalues $\lambda=\pm 1$. Thus there is a basis for $V=kx_1\oplus kx_2 \oplus \dots \oplus kx_n$ where 

	\[\sigma=\begin{pmatrix}
		1 & 0 & 0 & \dots & 0 \\
		0 & 1 & 0 & \dots & 0 \\
%
        0 & 0 & -1 & 0 & \dots \\
        \vdots & & \ddots & & \vdots \\
        0 & 0 & 0 & 0 & -1 \\
\end{pmatrix}.\] 

\vspace{.2cm}

Note that the number of negative entries is exactly the quantity $j$ from the theorem. We wish to show $j$ is at least 3 and odd.  Since $\det(\sigma)=-1,$ $j$ must be odd, so we must only deal with the case $j=1$. However, $j=1$ would mean that $\sigma$ is a pseudo-reflection, hence $G$ would not be small. Therefore, $j\geq 3$. It follows at once that $R$ is of the indicated form. 


($\Leftarrow$): Suppose $R$ is of the indicated form. As above, we then have $d_i=1$ for $i=1,\dots,n$. Then by \cite{Weston}*{Example 2.3} $\omega \cong x_1x_2\dots x_nS\cap R \cong (x_j,\dots, x_n)$ and hence $R\oplus \omega \cong k[[x_1,\dots,x_n]]$ as $R$-modules and thus, $\End_R(R \oplus \omega) \cong \End_R(S)$ which is known to have finite global dimension, see \cite{Iyama_steady}*{Example 2.3}.

\end{proof}

\begin{remark} It should be noted that the condition $|[\omega]|=2$ (in particular, that $R$ is not Gorenstein) is needed. If we do not require this, the theorem is false. \end{remark}

\begin{example} Let $R=k[[x,y]]^{\ZZ_2}$ where the group acts via $x\mapsto y$ and $y\mapsto x$. Then $R\cong k[[xy,x+y]]$ and hence is a regular local ring. Thus $\End_R(R\oplus \omega)=\End_R(R^2)$ has finite global dimension as it is Morita equivalent to $R$. Similar examples exist for larger $n$. 

\end{example}

\section{Acknowldgements}

The author would like to thank his adviser, Graham  J. Leuschke, for much insight and guidance, and the NSA for support via grant \# H98230-13-1-024. He would also like to thank the referee for many helpful suggestions and refinements.


\linespread{1}\selectfont



\newpage
\bibliographystyle{plain}
\bibliography{biblio}

\end{document}